\newtheorem{thm}{Theorem}[section]
\newtheorem{lem}[thm]{Lemma}
\newtheorem{prop}[thm]{Proposition}
\theoremstyle{definition} 
\newtheorem{defn}[thm]{Definition}
\newtheorem{q}[thm]{Question}
\newtheorem{rem}[thm]{Remark}
\numberwithin{equation}{section}
\newcommand{\secref}[1]{Section~\textup{\ref{#1}}}
\newcommand{\thmref}[1]{Theorem~\textup{\ref{#1}}}
\newcommand{\lemref}[1]{Lemma~\textup{\ref{#1}}}
\newcommand{\propref}[1]{Proposition~\textup{\ref{#1}}}
\newcommand{\remref}[1]{Remark~\textup{\ref{#1}}}
\newcommand{\DD}{\mathcal D}
\renewcommand{\AA}{\mathcal A}
\newcommand{\BB}{\mathcal B}
\newcommand{\II}{\mathcal I}
\newcommand{\RR}{\mathcal R}
\newcommand{\C}{\mathbb C}
\newcommand{\variso}{\overset{\simeq}{\longrightarrow}}
\newcommand{\what}{\widehat}
\newcommand{\wilde}{\widetilde}
\newcommand{\ann}{^\perp}
\renewcommand{\epsilon}{\varepsilon}
\renewcommand{\:}{\colon}
\renewcommand{\subset}{\subseteq}
\newcommand{\xm}{\otimes_{\max}}
\newcommand{\xg}{\otimes_G}
\newcommand{\elg}{\ell^\infty(G)}
\renewcommand{\)}{\textup)}
\newcommand{\rt}{\textup{rt}}
\newcommand{\lt}{\textup{lt}}
\newcommand{\id}{\text{\textup{id}}}
\newcommand{\midtext}[1]{\quad\text{#1}\quad}
\newcommand{\righttext}[1]{\quad\text{#1 }}
\begin{document}
\title[Tensor-product coaction functors]{Tensor-product coaction functors}
\author[Kaliszewski]{S. Kaliszewski}
\address{School of Mathematical and Statistical Sciences
\\Arizona State University
\\Tempe, Arizona 85287}
\email{kaliszewski@asu.edu}
\author[Landstad]{Magnus~B. Landstad}
\address{Department of Mathematical Sciences\\
Norwegian University of Science and Technology\\
NO-7491 Trondheim, Norway}
\email{magnus.landstad@ntnu.no}
\author[Quigg]{John Quigg}
\address{School of Mathematical and Statistical Sciences
\\Arizona State University
\\Tempe, Arizona 85287}
\email{quigg@asu.edu}

\date{December 10, 2019}

\subjclass[2000]{Primary  46L55; Secondary 46M15}
\keywords{
Crossed product,
action,
coaction,
tensor product,
Fell bundle}

\dedicatory{Dedicated to the memory of J. M. G. Fell, 1923--2016.}

\begin{abstract} 
Recent work by Baum, Guentner, and Willett,
and further developed by Buss, Echterhoff, and Willett
introduced a crossed-product functor that involves tensoring an action with a fixed action $(C,\gamma)$,
then forming the image inside the crossed product of
the maximal-tensor-product action.
For discrete groups, we
give an analogue,
for coaction functors.
We prove that composing our tensor-product coaction functor
with the full crossed product of an action reproduces their
tensor-crossed-product functor.
We prove that every such tensor-product coaction functor is exact and if
$(C,\gamma)$ is the action by translation on $\ell^\infty(G)$,
we prove that
the associated tensor-product coaction functor is minimal;
thereby recovering the analogous result by the above authors.
Finally, we discuss the connection with the $E$-ization functor we defined earlier, 
where $E$ is a large ideal of $B(G)$.
\end{abstract}
\maketitle

\section{Introduction}\label{intro}

For a fixed locally compact group $G$,
the full and the reduced crossed-product functors each take an action of $G$ on a $C^*$-algebra and produce
a $C^*$-algebra.
Baum, Guentner, and Willett \cite{bgwexact} studied \emph{exotic crossed-product functors} that are intermediate between the full and reduced crossed products, as part of an investigation of the Baum-Connes conjecture.
In Section~5 of that paper, the authors introduced a natural class of crossed products arising from tensoring with a fixed action.
Their general construction starts with an arbitrary crossed-product functor, but we only need the version for the full crossed product.
They prove that their tensor-crossed-product functor is 
exact.
Buss, Echterhoff, and Willett \cite{bew} further the study of these tensor-crossed-product functors,
and in Section~9 of that paper they prove that
the case with $\ell^\infty(G)$ produces the smallest of all tensor-crossed-product functors.
This leads them to ask whether tensoring with $\ell^\infty(G)$ in fact produces the minimal exact
correspondence crossed product.

Thus the $\ell^\infty(G)$-tensor-crossed-product functor takes on substantial importance.
We have initiated in \cite{klqfunctor, klqfunctor2} a new approach to exotic crossed products,
applying a coaction functor to the full crossed product.
We have shown that this procedure reproduces many
(perhaps all of the important?)
crossed-product functors,
and we believe that fully utilizing the coactions makes for a more robust theory.
In \cite{klqfunctor, klqfunctor2} we
have shown that the theory of coaction functors
is in numerous aspects parallel to that of
the crossed-product functors of
\cite{bgwexact, bew}.
In this paper,
using the techniques of \cite{bgwexact,bew} as a guide,
we initiate an investigation into an analogue for coaction functors of the tensor-crossed-product functors for actions
(see \secref{tensor D} for details).
Our development must of course have many differences from that of crossed products by actions,
since coactions are different from actions,
and also,
according to our paradigm for crossed-product functors,
our coaction functors form the second part of such a crossed product.

To give a more precise overview of our tensor-coaction functors,
we first outline
(with slightly modified notation),
the construction of tensor-crossed-products
from
\cite{bgwexact,bew}.
For technical reasons, our techniques currently only apply to discrete groups,
so from now on we suppose that the group $G$ is discrete.%
\footnote{It is certainly a draw-back of our techniques in this paper that we only handle discrete groups.
It is imperative to find some way to extend all this to arbitrary locally compact 
$G$, and we will investigate this in future research.}
Fix an action $(C,\gamma)$ of $G$.
Both papers \cite{bgwexact} and \cite{bew} require $C$ to be unital.
For every action $(B,\alpha)$ of $G$,
first form the diagonal action $\alpha\otimes\gamma$ of $G$
on the maximal tensor product $B\otimes_{\max} C$.
The embedding 
$b\mapsto b\otimes 1$
from $B$ to $B\otimes_{\max} C$
is $G$-equivariant,
and its crossed product is a homomorphism
from $B\rtimes_\alpha G$ to
$(B\otimes_{\max} C)\otimes_{\alpha\otimes\gamma} G$.
The
\emph{$C$-crossed product} $B\rtimes_{\alpha,C} G$
is
the image of $B\rtimes_\alpha G$
in $(B\otimes_{\max} C)\rtimes_{\alpha\otimes\gamma} G$
under this crossed-product homomorphism.
We want an analogue of this construction for coaction functors.
Our previous work indicates that
there should be a coaction on
$B\rtimes_{\alpha,C} G$
that is
the result of applying a coaction functor to
the dual coaction
$(B\rtimes_\alpha G,\what\alpha)$,
and presumably this should involve the fixed dual coaction
$(C\rtimes_\gamma G,\what\gamma)$.
Abstractly, we are led to search for a coaction functor
formed by somehow combining a coaction $(A,\delta)$
with a fixed coaction $(D,\zeta)$,
with $D$ unital,
to form a coaction $(A^D,\delta^D)$
in such a manner that if the two coactions are
$(B\rtimes_\alpha G,\what\alpha)$
and
$(C\rtimes_\gamma G,\what\gamma)$
then $(B\rtimes G)^{C\rtimes G}$ is
the natural image of $B\rtimes_\alpha G$ in
$(B\otimes_{\max} C)\rtimes_{\alpha\otimes\gamma} G$,
and $(\what\alpha)^{C\rtimes G}$ is
the restriction of the dual coaction $\what{\alpha\otimes\gamma}$.
Since we require $C$ to be unital, the crossed product $C\rtimes_\gamma G$ is unital too, so we incur no penalty by supposing that $D$ is unital as well.

We accomplish our goal via a ``$G$-balanced Fell bundle'' $\AA\xg\DD$,
whose cross-sectional $C^*$-algebra
embeds faithfully in the maximal tensor product $A\xm D$.

In \secref{prelim} we record our notation and terminology for
coactions,
Fell bundles,
and coaction functors.

Sections~\ref{tensor D}--\ref{minimal}
contain our main results,
and begin,
as we mentioned above,
by proving in \thmref{sigma D} the existence of
the \emph{tensor $D$ coaction functor},
for a fixed dual coaction $(D,\zeta)$.
For a maximal coaction $(A,\delta)$,
we define an equivariant homomorphism from $A$ to the $G$-balanced tensor product $A\otimes_GD$,
and then for an arbitrary coaction we first compose with maximalization.
In \thmref{compose}
we prove that when we compose with the full crossed product
we recover the tensor-crossed-product functors of \cite{bgwexact,bew}.
We prove in \thmref{min}
that
the case $D=\ell^\infty(G)\rtimes G$,
with $\zeta$ the dual of the translation action,
gives the smallest of these coaction functors.
We point out that our methods are in many cases drawn from those of \cite{bgwexact,bew},
but we modified the proof of minimality
--- \cite{bew} chooses an arbitrary state and temporarily uses completely positive maps as opposed to homomorphisms,
and we managed to avoid the need for these techniques.
Before that, we prove
a general lemma involving embeddings into exact functors,
from which 
in \thmref{exact}
we deduce
that all tensor $D$ coaction functors are exact.

We close in \secref{conclusion} with a few concluding remarks.
First of all, we acknowledge that our standing assumption that the group $G$ is discrete was heavily used,
and we hope to generalize in future work to arbitrary locally compact groups.
We also mention that it is certainly necessary to use a mixture of Fell bundles and coactions --- Fell bundles by themselves are insufficient for our purposes.
We then describe a tantalizing connection with the coaction functors determined by large ideals $E$ of the Fourier-Stieltjes algebra.

We added a very short appendix containing a Fell-bundle version of \lemref{abstract exact},
which could be proved using the lemma and which would lead to a quick proof of \thmref{exact}.
However, we felt that it would interrupt the flow too much to actually use \propref{fb exact} in the main development of \secref{exactness}, and it would in fact have lengthened the exposition.

\section{Preliminaries}\label{prelim}

Throughout, $G$ will be a discrete group,
with identity element denoted by $e$.

We refer to \cite[Appendix~A]{enchilada} and \cite{maximal} for background material on coactions,
and to \cite{klqfunctor,klqfunctor2} for coaction functors.

For an action $(A,\alpha)$ of $G$ we use the following notation:
\begin{itemize}
\item
$(i_A^\alpha,i_G^\alpha)$ is the universal representation of $(A,\alpha)$ in $M(A\rtimes_\alpha G)$;
this is abbreviated to $(i_A,i_G)$ when the action $\alpha$ is clear from context.

\item
$\what\alpha$ is the dual coaction of $G$ on $A\rtimes_\alpha G$.
\end{itemize}

\subsection*{Fell bundles}

We work as much as possible in the context of Fell bundles over $G$,
and the primary references are \cite{fd2, exelbook, discrete}.
The canonical Fell bundle over $G$ is the line bundle $\C\times G$,
whose $C^*$-algebra is naturally isomorphic to $C^*(G)$.
If $\AA=\{A_s\}_{s\in G}$
and $\BB=\{B_s\}_{s\in G}$
are Fell bundles over $G$,
we say a map $\phi\:\AA\to\BB$ is a \emph{homomorphism}
if it preserves all the structure (in particular, multiplication and involution).

We call an operation-preserving map $\pi$ from a Fell bundle $\AA$ into a $C^*$-algebra $B$ 
a \emph{representation} of $\AA$ in $B$.
We call a representation $\phi$ \emph{nondegenerate} if
$\phi(A_e)B=B$.
We write $C^*(\AA)$ for the cross-sectional $C^*$-algebra of a Fell bundle $\AA$,
and $i_\AA\:\AA\to C^*(\AA)$ for the universal representation,
so that for every nondegenerate representation $\pi\:\AA\to B$ there is a unique (nondegenerate) homomorphism
$\wilde\pi\:C^*(\AA)\to B$,
which we call the \emph{integrated form} of $\pi$,
making the diagram
\[
\xymatrix{
\AA \ar[r]^-\pi \ar[d]_{i_\AA}
&B
\\
C^*(\AA) \ar@{-->}[ur]_{\wilde\pi}
}
\]
commute.
If $\phi\:\AA\to \BB$ is a Fell-bundle homomorphism,
then $i_\BB\circ\phi$ is a nondegenerate representation,
so by the universal property
there is a unique $C^*$-homomorphism $C^*(\phi)$ making the diagram
\[
\xymatrix@C+20pt{
\AA \ar[r]^-\phi \ar[d]_{i_\AA}
&\BB \ar[d]^{i_\BB}
\\
C^*(\AA) \ar[r]_-{C^*(\phi)}
&C^*(\BB)
}
\]
commute.
Thus, the assignment $\AA\mapsto C^*(\AA)$ is functorial from Fell bundles to 
the \emph{nondegenerate category} of  $C^*$-algebras,
in which a morphism from $A$ to $B$ is a nondegenerate homomorphism $\pi\:A\to M(B)$.
We frequently
suppress the universal representation $i_\AA$,
and
regard the fibres $A_s$ of the Fell bundle $\AA$ as sitting inside $C^*(\AA)$,
so that the passage from a representation to its integrated form can be regarded as extending from $\AA$ to $C^*(\AA)$,
and in fact we will frequently use the same notation for both the representation and its integrated form.

Recall from \cite{ngdiscrete, discrete}
that 
for every Fell bundle $\AA$ over $G$
there is a coaction $\delta_\AA\:C^*(\AA)\to C^*(\AA)\otimes C^*(G)$
(where unadorned $\otimes$ denotes the minimal $C^*$-tensor product)
given by
\[
\delta_\AA(a_s)=a_s\otimes s\righttext{for}s\in G,a_s\in A_s,
\]
and conversely
for
every coaction $(A,\delta)$ of $G$ the spectral subspaces
\[
A_s=\{a\in A:\delta(a)=a\otimes s\}
\]
give a Fell bundle $\AA=\{A_s\}_{s\in G}$.
Moreover, if $(B,\epsilon)$ is another coaction,
with associated Fell bundle $\BB$,
then
a homomorphism $\phi\:A\to B$ is $\delta-\epsilon$ equivariant
if and only if
it restricts to a homomorphism $\AA\to \BB$.
By \cite[Proposition~4.2]{maximal}, the coaction $\delta$ is maximal if and only if
the integrated form of the inclusion representation $A_s\hookrightarrow A$
is an isomorphism $C^*(\AA)\simeq A$.
Thus, for maximal coactions $(A,\delta)$ we can define a homomorphism from $A$ to another $C^*$-algebra $B$ simply by giving a representation of the Fell bundle $\AA$ in $B$.

\begin{rem}\label{subgroup}
We will need
the following result \cite[Corollary~6.3]{AEKdynamical}:
if $\AA=\{A_s\}_{s\in G}$ is a Fell bundle over $G$
and $H$ is a subgroup of $G$,
then the canonical map $C^*(\AA_H)\to C^*(\AA)$
is injective
(where $\AA_H=\{A_h\}_{h\in H}$ is the restriction to a Fell bundle over $H$).
\end{rem}

\section{Tensor $D$ functors}\label{tensor D}

We will be particularly interested in the case of a homomorphism from
the canonical bundle $\C\times G$ to another Fell bundle
$\DD=\{D_s\}_{s\in G}$,
and we will just say that we have a homomorphism $V\:G\to\DD$.
Note that this will require the unit fibre $C^*$-algebra $D_e$ to be unital, and the elements $V_s$ for $s\in G$ will have to be unitary.

Given Fell bundles $\AA,\DD$ over $G$,
with cross-sectional algebras $A=C^*(\AA)$ and $D=C^*(\DD)$,
we form a new Fell bundle
$\AA\xg\DD$
over $G$ as follows:
the fibre over $s\in G$
is the closure
in $A\xm D$
of the algebraic tensor product $A_s\odot D_s$,
and we write this fibre as $A_s\xm D_s$.
We write
\[
A\xg D=C^*(\AA\xg\DD).
\]

We then define a Fell-bundle homomorphism
\[
\phi_\AA\:\AA\to \AA\xg\DD
\]
by
\[
\phi_\AA(a_s)=a_s\otimes V_s\righttext{for}a_s\in A_s.
\]
Then the image $\phi_\AA(\AA)$ is a 
Fell subbundle
\[
\AA^\DD=\{A_s\otimes V_s\}_{s\in G}
\]
of $\AA\xg\DD$.
Applying the $C^*$-functor gives a homomorphism
\[
Q_A=C^*(\phi_\AA)\:A\to A\xg D,
\]
and we write
\[
A^D=Q_A(A).
\]
Occasionally, if $A$ is understood we will just write $Q$ for $Q_A$.
On the other hand, if $\DD$, and so $D$, is ambiguous, we write $Q_A^D$.
There is a subtlety: although the fibres $A_s\xm D_s$ give a linearly independent family of Banach subspaces of $A\xm D$
with dense linear span,
making $A\xm D$ a graded $C^*$-algebra over $G$ in the sense of Exel \cite[Definition~6.2]{exelbook},
it is not a priori obvious that
the the inclusion map $\AA\xg\DD\hookrightarrow A\xm D$ gives a faithful embedding of
the Fell-bundle $C^*$-algebra $A\xg D$ 
in 
$A\xm D$.
The following theorem establishes this fact.

\begin{thm}\label{faithful}
If $\pi\:\AA\xg\DD\to A\xm D$ is the  representation given by inclusions of the subspaces $A_s\xm D_s$,
the integrated form
\[
\wilde\pi\:A\xg D\to A\xm D
\]
is injective.
\end{thm}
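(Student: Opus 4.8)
The plan is to realize $\AA\xg\DD$ as the restriction, to the diagonal subgroup of $G\times G$, of a Fell bundle over $G\times G$ whose cross-sectional $C^*$-algebra is literally $A\xm D$, and then to invoke \remref{subgroup}.

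First I would form the Fell bundle $\AA\boxtimes\DD$ over $G\times G$ whose fibre over $(s,t)$ is the closed subspace $A_s\xm D_t=\overline{A_s\odot D_t}$ of $A\xm D$, with the multiplication and involution inherited from $A\xm D$. For this to make sense I need the fibres to be linearly independent, and this holds because $A\xm D$ carries the coaction $\eta$ of $G\times G$ determined by $\eta(a_s\otimes d_t)=(a_s\otimes d_t)\otimes(s,t)$ (built from $\delta_\AA$ and $\delta_\DD$ in the usual external-tensor-product way, using that $C^*(G\times G)=C^*(G)\xm C^*(G)$): the $A_s\xm D_t$ are exactly its spectral subspaces, which are automatically linearly independent, so $\AA\boxtimes\DD$ is precisely the spectral-subspace bundle of $(A\xm D,\eta)$ in the sense of \secref{prelim}. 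I would also note that $\bigcup_{s,t}A_s\xm D_t$ has dense linear span in $A\xm D$, since that span contains $\bigl(\spn\bigcup_sA_s\bigr)\odot\bigl(\spn\bigcup_tD_t\bigr)$.

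The crux is then to show that the integrated form of the inclusion representation of $\AA\boxtimes\DD$ in $A\xm D$ is an isomorphism $C^*(\AA\boxtimes\DD)\variso A\xm D$; equivalently, by \cite[Proposition~4.2]{maximal}, that the coaction $\eta$ is maximal. Here I would appeal to the fact that the maximal tensor product of the cross-sectional $C^*$-algebras of two Fell bundles over discrete groups is the cross-sectional $C^*$-algebra of the product Fell bundle over the product group---equivalently, that an external tensor product of maximal coactions is maximal. For a self-contained argument one proceeds as follows: since $\bigcup_{s,t}A_s\xm D_t$ spans densely, the integrated form $q\colon C^*(\AA\boxtimes\DD)\to A\xm D$ is surjective, so it suffices to produce a left inverse; by the universal properties of the maximal tensor product and of $C^*(\AA)$ and $C^*(\DD)$, it is enough to manufacture, from the universal representation $i$ of $\AA\boxtimes\DD$, nondegenerate representations $\pi\colon\AA\to M\bigl(C^*(\AA\boxtimes\DD)\bigr)$ and $\sigma\colon\DD\to M\bigl(C^*(\AA\boxtimes\DD)\bigr)$ with commuting ranges and $\pi(a_s)\sigma(d_t)=i(a_s\otimes d_t)$, where $\pi(a_s)$ is the multiplier determined (via nondegeneracy of $i$ and an approximate unit for $A_e\xm D_e$) by $\pi(a_s)\,i(a_e\otimes d_t)=i\bigl((a_sa_e)\otimes d_t\bigr)$ and symmetrically for $\sigma$. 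I expect this maximality/factorization step to be the main obstacle; everything else is formal bookkeeping.

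To conclude, the diagonal $\Delta=\{(s,s):s\in G\}$ is a subgroup of $G\times G$, and the restricted Fell bundle $(\AA\boxtimes\DD)_\Delta$ is exactly $\AA\xg\DD$ under the identification $\Delta\cong G$. By \remref{subgroup} the canonical map $C^*(\AA\xg\DD)=C^*\bigl((\AA\boxtimes\DD)_\Delta\bigr)\to C^*(\AA\boxtimes\DD)$ is injective, and composing it with the isomorphism $C^*(\AA\boxtimes\DD)\variso A\xm D$ yields an injective homomorphism $A\xg D\to A\xm D$ whose restriction to each fibre $A_s\xm D_s$ is the inclusion into $A\xm D$. This homomorphism is therefore $\wilde\pi$, so $\wilde\pi$ is injective.
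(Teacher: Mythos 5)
Your proof is correct and follows essentially the same route as the paper's: form the external-tensor-product Fell bundle $\{A_s\xm D_t\}$ over $G\times G$, identify its cross-sectional $C^*$-algebra with $A\xm D$, restrict to the diagonal subgroup, and apply \remref{subgroup}. The only difference is that the paper disposes of the key identification $C^*(\AA\xm\DD)\cong A\xm D$ by citing \cite[Proposition~4.6]{abadietensorfell}, whereas you sketch a direct proof of that step (which is in substance the proof of the cited proposition).
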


\begin{proof}
First, consider the Fell bundle
\[
\AA\xm\DD=\{A_s\xm D_t\}_{(s,t)\in G\times G},
\]
where, similarly to the definition of $\AA\xg\DD$, we define $A_s\xm D_t$ as the closure of the algebraic tensor product $A_s\odot D_t$ in $A\xm D$.
By \cite[Proposition~4.6]{abadietensorfell} 
the integrated form of
the representation of the Fell bundle $\AA\xm\DD$ in $A\xm D$ given by the
inclusions
\[
A_s\xm D_t\hookrightarrow A\xm D
\]
is an injective homomorphism
\[
C^*(\AA\xm\DD)\to A\xm D.
\]
In view of this, we identify $C^*(\AA\xm\DD)=A\xm D$.

Now, the diagonal subgroup
\[
\Delta=\{(s,s):s\in G\}
\]
of $G\times G$ is isomorphic to $G$ in the obvious way,
and thus the restriction
\[
(\AA\xm\DD)_\Delta=\{A_s\xm D_s\}_{s\in G}
\]
of the Fell bundle $\AA\xm\DD$ to $\Delta$
is canonically isomorphic to our Fell bundle $\AA\xg\DD$ over $G$.
By \cite[Corollary~6.3]{AEKdynamical}
(which we mentioned in \remref{subgroup})
the canonical map
\[
C^*\bigl((\AA\xm\DD)_\Delta\bigr)\to C^*(\AA\xm\DD)
\]
is injective,
and it follows by the above isomorphism that $\wilde\pi$ is injective also.
\end{proof}

In view of \thmref{faithful} we can identify $A\xg D$ with the $C^*$-subalgebra of $A\xm D$ given by the closed span of the subspaces $\{A_s\xm D_s\}_{s\in G}$.

Note that the homomorphism $Q_A\:A\to A\xm D$ is nondegenerate.

For any Fell bundle $\AA$, let $\delta=\delta_\AA$ be the canonical coaction of $G$ on $A=C^*(\AA)$.
By functoriality,
$Q_A$ is
$\delta_\AA-\delta_{\AA\xg\DD}$
equivariant, and hence is
equivariant for $\delta$ and a unique coaction $\delta^D$ on the image $A^D$.

\begin{thm}\label{sigma D}
There is a functor $\sigma^D$ from the category of maximal coactions to the category of all coactions, defined as follows:
\begin{enumerate}
\item
On objects: $(A,\delta)\mapsto (A^D,\delta^D)$.

\item
On morphisms:
given maximal coactions $(A,\delta)$ and $(B,\epsilon)$ and
a $\delta-\epsilon$ equivariant homomorphism $\phi\:A\to B$,
let
$\AA$ and $\BB$ be Fell bundles such that $A=C^*(\AA)$ and $B=C^*(\BB)$,
let
$\psi\:\AA\to\BB$ be the unique Fell-bundle homomorphism such that $\phi=C^*(\psi)$,
and define $\phi^D$ by the commutative diagram
\[
\xymatrix@C+30pt{
A \ar[r]^-\phi \ar[d]_{Q_A}
&B \ar[d]^{Q_B}
\\
A^D \ar@{-->}[r]^-{\phi^D} \ar@{^(->}[d]
&B^D \ar@{^(->}[d]
\\
A\xg D \ar[r]_-{C^*(\psi\xg\id)}
&B\xg D,
}
\]
where it is clear that $C^*(\psi\xg\id)$ maps $A^D$ into $B^D$.

Moreover, for any maximal coaction $(A,\delta)$
we have
\[
\ker Q_A\subset \ker \Lambda,
\]
where $\Lambda\:A\to A^n$
is the normalization.
\end{enumerate}
\end{thm}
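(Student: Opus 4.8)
The two functoriality assertions are essentially bookkeeping built on the dictionary of \secref{prelim}. On objects, $\sigma^D$ is the composite of the Fell-bundle functor $\AA\mapsto\AA\xg\DD$ with $C^*(-)$ followed by passage to the image; on morphisms, since $\delta$-$\epsilon$ equivariant homomorphisms between maximal coactions correspond bijectively to Fell-bundle homomorphisms $\psi\colon\AA\to\BB$, the formula $\phi^D=C^*(\psi\xg\id)|_{A^D}$ is well defined and functorial once one checks that $C^*(\psi\xg\id)$ carries $A^D$ into $B^D$, which follows from $C^*(\psi\xg\id)\circ Q_A=Q_B\circ\phi$. So I will concentrate on the displayed inclusion $\ker Q_A\subset\ker\Lambda$, which is the substantive point.

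Recall that since $(A,\delta)$ is maximal with $A=C^*(\AA)$, its normalization is the regular representation $\Lambda=\Lambda_\AA\colon C^*(\AA)\to C^*_r(\AA)$. Fixing a faithful nondegenerate representation $\pi_e\colon A_e\to B(H)$, $\Lambda_\AA$ is the integrated form of the regular representation of $\AA$ on the Hilbert space $H_r=\ell^2(\AA)\otimes_{\pi_e}H$; the structural fact I will use is that $H_r$ is $G$-graded, $H_r=\bigoplus_{t\in G}H_t$ with $H_t$ the closed span of $A_t\otimes_{\pi_e}H$, in such a way that $\Lambda_\AA(a_s)H_t\subset H_{st}$ for all $s,t\in G$ and $a_s\in A_s$. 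Writing $P_t$ for the orthogonal projection onto $H_t$, this homogeneity reads $\Lambda_\AA(a_s)P_t=P_{st}\Lambda_\AA(a_s)$.

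The plan is to produce a representation $\Theta$ of $A\xm D$ such that $\Theta\circ\Psi=(-\otimes 1_K)\circ\Lambda_\AA$, where $\Psi\colon A\to A\xm D$ is the homomorphism $\wilde\pi\circ Q_A$ (with $\wilde\pi$ as in \thmref{faithful}), so $\Psi(a_s)=a_s\otimes V_s$, and $K$ is an auxiliary Hilbert space. Since $D$ is unital, fix a faithful unital representation $\rho\colon D\to B(K)$; as each $V_s$ is unitary in $D$ with $V_sV_t=V_{st}$, the unitaries $v_s:=\rho(V_s)$ form a unitary representation of $G$ on $K$. The commuting representations $\Lambda_\AA\otimes 1_K$ and $1_{H_r}\otimes\rho$ integrate to a representation $\Theta_0\colon A\xm D\to B(H_r\otimes K)$ with $\Theta_0(a\otimes d)=\Lambda_\AA(a)\otimes\rho(d)$. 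Put $W=\sum_{t\in G}P_t\otimes v_t^{\,*}$, a unitary on $H_r\otimes K$, and $\Theta=\ad W\circ\Theta_0$. A direct computation using $\Lambda_\AA(a_s)P_t=P_{st}\Lambda_\AA(a_s)$ and $v_sv_t=v_{st}$ gives
\[
\Theta(\Psi(a_s))=W\bigl(\Lambda_\AA(a_s)\otimes v_s\bigr)W^*=\Lambda_\AA(a_s)\otimes 1_K\qquad(a_s\in A_s).
\]
Hence $\Theta\circ\Psi$ and $(-\otimes 1_K)\circ\Lambda_\AA$ are $*$-homomorphisms on $C^*(\AA)$ that agree on every fibre $A_s$, hence on all of $C^*(\AA)$; since $x\mapsto x\otimes 1_K$ is injective on $C^*_r(\AA)$, we get $\ker(\Theta\circ\Psi)=\ker\Lambda$. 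As $\ker Q_A\subset\ker\Psi\subset\ker(\Theta\circ\Psi)$, this yields $\ker Q_A\subset\ker\Lambda$, as required.

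The main obstacle is the construction of $\Theta$ --- concretely, the choice of the unitary $W$ that absorbs the spurious factors $v_s$ introduced by the embedding $\phi_\AA$, so that $\Theta\circ\Psi$ collapses onto a copy of the regular representation of $\AA$. This works precisely because the regular-representation Hilbert space $H_r$ carries a $G$-grading with respect to which $\Lambda_\AA(a_s)$ is homogeneous of degree $s$, and because $V\colon G\to\DD$ supplies a genuine unitary representation of $G$ on $K$; with these in place the identity $W(\Lambda_\AA(a_s)\otimes v_s)W^*=\Lambda_\AA(a_s)\otimes 1_K$ is a short calculation with the $P_t$ and the relation $v_sv_t=v_{st}$. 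A secondary point that should be pinned down against the references is the identification of the normalization of $C^*(\AA)$ with the regular representation onto $C^*_r(\AA)$, together with the $G$-grading of $H_r$.
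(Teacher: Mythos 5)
Your proof is correct, and the functoriality assertions are handled just as in the paper (which likewise treats them as bookkeeping via the functor $A\mapsto A\xm D$ and the induced map $\phi\xm\id$). For the substantive inclusion $\ker Q_A\subset\ker\Lambda$ you take a genuinely different, though morally parallel, route. The paper composes $Q_A$ with the coaction $\delta^D$ and with $\Upsilon\otimes\lambda$ to land in the minimal tensor product $A\otimes D\otimes C^*_r(G)$, where $a_s\mapsto a_s\otimes V_s\otimes\lambda_s$, and then invokes Fell's absorption trick for the representation $V\otimes\lambda$ to produce an endomorphism sending this to $a_s\otimes 1_D\otimes\lambda_s$; after deleting the middle factor the composite is exactly $\Lambda=(\id\otimes\lambda)\circ\delta$. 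You never touch $\delta^D$: you represent $A\xm D$ directly on $H_r\otimes K$ via the regular representation of $\AA$ tensored with a faithful representation of $D$, and absorb the unitaries $v_s$ by conjugating with the explicit unitary $W=\sum_t P_t\otimes v_t^*$ built from the $G$-grading of $H_r$ (your computation $W(\Lambda_\AA(a_s)\otimes v_s)W^*=\Lambda_\AA(a_s)\otimes 1_K$ checks out, and $\ker Q_A=\ker\Psi$ by \thmref{faithful}). The two arguments are mirror images of the same absorption principle: the paper absorbs $V$ into $\lambda$, you absorb $v=\rho\circ V$ into the grading of $\Lambda_\AA$. What your version buys is explicitness --- the absorbing unitary is written down and the verification is a two-line calculation --- at the cost of importing the concrete model of the normalization as the regular representation $C^*(\AA)\to C^*_r(\AA)$ on $\ell^2(\AA)\otimes_{\pi_e}H$; that identification (and the grading of $H_r$) is standard for discrete $G$ and is exactly the point you rightly flag as needing a citation. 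The paper's version stays at the level of $C^*$-algebra maps and uses only $\Lambda=(\id\otimes\lambda)\circ\delta$, but leaves the absorption step as an appeal to Fell's trick rather than exhibiting the unitary.
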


\begin{proof}
We obviously have a functor $A\mapsto A\xm D$ on the category of $C^*$-algebras.
As discussed above, if $\delta$ is a maximal coaction on $A$,
then we have a homomorphism $Q_A\:A\to A\xm D$ taking $a_s$ to $a_s\otimes V_s$.
If $\phi\:A\to B$ is equivariant for maximal coactions $\delta$ and $\epsilon$,
then obviously we get a commutative diagram
\[
\xymatrix@C+20pt{
A \ar[r]^-\phi \ar[d]_{Q_A}
&B \ar[d]^{Q_B}
\\
A\xm D \ar[r]_-{\phi\xm\id}
&B\xm D,
}
\]
so
\[
(\phi\xm\id)(A^D)\subset B^D,
\]
and hence the restriction gives a homomorphism $\phi^D$ making the diagram
\[
\xymatrix{
A \ar[r]^-\phi \ar[d]_{Q_A}
&B \ar[d]^{Q_B}
\\
A^D \ar[r]_-{\phi^D}
&B^D
}
\]
commute.
Moreover, by considering elements of spectral subspaces
it is obvious that $\phi^D$ is $\delta^D-\epsilon^D$ equivariant.
Since $A\mapsto A\xm D$ is functorial,
it follows that we now have a functor $A\mapsto A^D$
from maximal coactions to coactions.

We turn to the inclusion $\ker Q_A\subset \ker\Lambda$.
The composition $\delta^D\circ Q_A$
maps $A$ into $(A\xm D)\otimes C^*(G)$.
Composing with the homomorphism
\[
\Upsilon\otimes\lambda\:(A\xm D)\otimes C^*(G)
\to A\otimes D\otimes C^*_r(G),
\]
where $\Upsilon$ is the canonical surjection
\[
A\xm D\to A\otimes D,
\]
we get a homomorphism
\begin{equation}\label{comp}
(\Upsilon\otimes\lambda)\circ\delta^D\circ Q_A\:A\to A\otimes D\otimes C^*_r(G)
\end{equation}
which takes any element $a_s\in A_s$ to
\begin{equation}\label{tensor}
a_s\otimes V_s\otimes \lambda_s.
\end{equation}
Representing faithfully on Hilbert space,
we can apply Fell's absorption trick to the representation $V\otimes\lambda$
to construct an endomorphism $\tau$ of
$A\otimes D\otimes C^*_r(G)$
that takes any element of the form \eqref{tensor}
to
\[
a_s\otimes 1_D\otimes\lambda_s.
\]
Then
\[
\tau\circ(\Upsilon\otimes\lambda)\circ\delta^D\circ Q_A
\:A\to A\otimes 1_D\otimes C^*_r(G)
\]
takes any element $a_s\in A_s$ to
\[
a_s\otimes 1\otimes \lambda_s.
\]
Then composing with the obvious isomorphism
\[
\theta\:A\otimes 1_D\otimes C^*_r(G)\variso A\otimes C^*_r(G),
\]
we get a homomorphism
\[
\theta\circ
\tau\circ(\Upsilon\otimes\lambda)\circ\delta^D\circ Q_A
\:A\to A\otimes C^*_r(G)
\]
taking any element $a_s\in A_s$ to $a_s\otimes\lambda_s$.

On the other hand,
$\Lambda=(\id\otimes\lambda)\circ\delta$,
and for any element $a_s\in A_s$ we have
\[
(\id\otimes\lambda)\circ\delta(a_s)=a_s\otimes\lambda_s.
\]
Thus $\theta\circ\tau\circ(\Upsilon\otimes\lambda)\circ\delta^D\circ Q_A=\Lambda$,
so
\[
\ker Q_A\subset \ker \Lambda,
\]
completing the proof.
\end{proof}

The second part of \thmref{sigma D} justifies the following:
\begin{defn}
We define a coaction functor $\tau^D$ on the category of coactions by
\[
\tau^D=\sigma^D\circ \text{(maximalization)}.
\]
\end{defn}

\begin{prop}\label{bundle iso}
Let $(B,\alpha)$ and $(C,\gamma)$ be two actions of $G$.
Then the map
\[
\phi\:(B\times G)\xg (C\times G)\to (B\xm C)\times G
\]
defined by
\[
\phi\bigl((b,s)\otimes (c,s)\bigr)=(b\otimes c,s)
\]
is a Fell-bundle homomorphism,
and
consequently
\[
C^*(\phi)\:(B\rtimes_\alpha G)\xg (C\rtimes_\gamma G)\to (B\xm C)\rtimes_{\alpha\otimes\gamma} G
\]
is a $C^*$-isomorphism.
\end{prop}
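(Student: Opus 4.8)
The plan is to verify directly that $\phi$ is a well-defined Fell-bundle homomorphism and then invoke the functoriality and faithfulness machinery already in place. First I would recall the explicit description of the fibres. For an action $(B,\alpha)$ the Fell bundle $B\times G$ has fibre over $s$ equal to $B\times\{s\}=\{(b,s):b\in B\}$ sitting inside $M(B\rtimes_\alpha G)$ as $i_B(b)i_G(s)$, with multiplication $(b,s)(b',t)=(b\alpha_s(b'),st)$ and involution $(b,s)^*=(\alpha_{s^{-1}}(b^*),s^{-1})$. By the construction in \secref{tensor D}, the fibre of $(B\times G)\xg(C\times G)$ over $s$ is the closure of $(B\times\{s\})\odot(C\times\{s\})$ in $(B\rtimes_\alpha G)\xm(C\rtimes_\gamma G)$, which is spanned by elementary tensors $(b,s)\otimes(c,s)$. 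On the other side, the fibre of $(B\xm C)\times G$ over $s$ is $(B\xm C)\times\{s\}$. So the formula $\phi\bigl((b,s)\otimes(c,s)\bigr)=(b\otimes c,s)$, extended linearly and continuously, has the right source and target on each fibre; I would note that $B\xm C$ is the completion of $B\odot C$ and that the elementary tensors $b\otimes c$ are total, so $\phi$ is well-defined on a dense subset of each fibre and extends by continuity once boundedness is checked.

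Next I would check that $\phi$ respects the Fell-bundle operations. For multiplication, compute in $(B\rtimes_\alpha G)\xm(C\rtimes_\gamma G)$:
\[
\bigl((b,s)\otimes(c,s)\bigr)\bigl((b',t)\otimes(c',t)\bigr)=\bigl((b,s)(b',t)\bigr)\otimes\bigl((c,s)(c',t)\bigr)=(b\alpha_s(b'),st)\otimes(c\gamma_s(c'),st),
\]
which $\phi$ sends to $(b\alpha_s(b')\otimes c\gamma_s(c'),st)$; on the other hand $\phi$ of the two factors multiplied in $(B\xm C)\rtimes_{\alpha\otimes\gamma}G$ gives $(b\otimes c,s)(b'\otimes c',t)=((b\otimes c)(\alpha\otimes\gamma)_s(b'\otimes c'),st)=(b\alpha_s(b')\otimes c\gamma_s(c'),st)$, using that $(\alpha\otimes\gamma)_s$ extends to $B\xm C$ and sends $b'\otimes c'$ to $\alpha_s(b')\otimes\gamma_s(c')$. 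These agree. The involution check is analogous, using $(\alpha\otimes\gamma)_{s^{-1}}((b\otimes c)^*)=\alpha_{s^{-1}}(b^*)\otimes\gamma_{s^{-1}}(c^*)$. For the grading/linear-independence bookkeeping and boundedness I would lean on \thmref{faithful}: it identifies $(B\rtimes_\alpha G)\xg(C\rtimes_\gamma G)$ with a $C^*$-subalgebra of $(B\rtimes_\alpha G)\xm(C\rtimes_\gamma G)$, so on each fibre the norm is just the ambient one, and the map $(b,s)\otimes(c,s)\mapsto(b\otimes c,s)$ is isometric on the algebraic level because both sides are identified inside maximal tensor products where elementary tensors behave the expected way — this makes $\phi$ a genuine (isometric) Fell-bundle homomorphism.

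Then I would apply $C^*(-)$: functoriality of $\AA\mapsto C^*(\AA)$ gives the homomorphism $C^*(\phi)\:(B\rtimes_\alpha G)\xg(C\rtimes_\gamma G)\to C^*\bigl((B\xm C)\times G\bigr)=(B\xm C)\rtimes_{\alpha\otimes\gamma}G$, using the standard identification of the cross-sectional algebra of the semidirect-product Fell bundle with the crossed product. To see it is an isomorphism, I would exhibit the inverse on the level of Fell bundles: the map $\psi\:(B\xm C)\times G\to(B\times G)\xg(C\times G)$ should send $(b\otimes c,s)\mapsto(b,s)\otimes(c,s)$ and extend linearly/continuously — this is exactly the inclusion of the $s$-fibre of $\AA\xg\DD$ inside $A\xm D$ followed by the identification, and $\psi\circ\phi$ and $\phi\circ\psi$ are the identity on each fibre by inspection. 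Hence $C^*(\phi)$ and $C^*(\psi)$ are mutually inverse $C^*$-homomorphisms.

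The main obstacle is the well-definedness and isometry of $\phi$ on the fibres: a priori the fibre $(B\rtimes_\alpha G)\xg(C\rtimes_\gamma G)_s$ is a completed algebraic tensor product of two subspaces of maximal group-crossed-products, and one must be sure there is no norm collapse when mapping it to $(B\xm C)\times\{s\}$, and conversely that $B\xm C$ is not "bigger" than the completion of $(B\times\{s\})\odot(C\times\{s\})$ realized inside $(B\rtimes_\alpha G)\xm(C\rtimes_\gamma G)$. This is precisely where \thmref{faithful} (via \cite[Proposition~4.6]{abadietensorfell} and \cite[Corollary~6.3]{AEKdynamical}) does the work: it tells us the fibre norm on $\AA\xg\DD$ is inherited from $A\xm D$, and the universal property of the maximal tensor product together with the fact that $B\rtimes_\alpha G\supseteq i_B(B)i_G(s)$ and $C\rtimes_\gamma G\supseteq i_C(C)i_G(s)$ lets us match $(b,s)\otimes(c,s)$ with $(b\otimes c,s)$ isometrically. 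Once that identification of fibre norms is granted, everything else is the routine bookkeeping sketched above.
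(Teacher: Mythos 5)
Your overall strategy is the same as the paper's: verify directly that $\phi$ is a Fell-bundle homomorphism, then produce an explicit inverse sending $(b\otimes c,s)$ back to $(b,s)\otimes(c,s)$. The paper assembles that inverse not fibrewise but as the integrated form $\Pi=\pi\times U$ of a covariant representation of $(B\xm C,\alpha\otimes\gamma)$, where $\pi$ comes from the universal property of the maximal tensor product applied to the commuting pair $i_B\xm 1$ and $1\xm i_C$, and $U_s=i_G^\alpha(s)\xm i_G^\gamma(s)$. Restricted to the $s$-fibre, $\pi(\cdot)U_s$ is exactly your $\psi$, but the two universal properties (of $\xm$ and of the crossed product) supply the boundedness and well-definedness for free.

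That boundedness is where your write-up has a genuine soft spot. You justify the ``isometric'' matching of fibres by \thmref{faithful} plus the claim that elementary tensors ``behave the expected way'' inside maximal tensor products. But \thmref{faithful} only says that $C^*(\AA\xg\DD)$ embeds in $A\xm D$, i.e., that the fibre norms of $\AA\xg\DD$ are those induced from $(B\rtimes_\alpha G)\xm(C\rtimes_\gamma G)$; it says nothing about whether the induced norm on $B\odot C$, sitting in the unit fibre as the span of the $(b,e)\otimes(c,e)$, coincides with the maximal tensor norm. That is the injectivity of the canonical map $B\xm C\to(B\rtimes_\alpha G)\xm(C\rtimes_\gamma G)$, and maximal tensor products are notoriously \emph{not} injective for subalgebra inclusions --- the paper itself runs into precisely this ``mystery'' with $\id\xm\iota$ in \secref{conclusion}, so the ``expected behavior'' cannot be taken for granted. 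Without it, $\phi$ is not even obviously well defined on the unit fibre (a $*$-homomorphism given on a dense $*$-subalgebra need not extend), and your $\psi$ is not obviously contractive on its fibre $B\xm C$. Both points can be repaired: continuity of $\psi$ on $B\xm C$ is exactly what the universal property of $\xm$ gives (this is the paper's $\pi$), and for $\phi$ one can use that, for discrete $G$, the conditional expectations of $B\rtimes_\alpha G$ and $C\rtimes_\gamma G$ onto their unit fibres combine to a completely positive map splitting the inclusion $B\xm C\to(B\rtimes_\alpha G)\xm(C\rtimes_\gamma G)$, whence injectivity; the general fibre then follows by multiplying by the unitary multipliers $i_G^\alpha(s)\otimes i_G^\gamma(s)$. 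With those two points supplied, your multiplication and involution computations and the check that the two composites are the identity on generators go through as in the paper.
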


\begin{proof}
The first statement is easily verified,
and for the second we produce an inverse:
define
\[
\pi\:B\xm C\to (B\rtimes_\alpha G)\xg (C\rtimes_\gamma G)
\]
as the unique homomorphism associated to the commuting homomorphisms
$i_B\xm 1$ and $1\xm i_C$,
and define a unitary homomorphism
\[
U\:G\to M\bigl((B\rtimes_\alpha G)\xg (C\rtimes_\gamma G)\bigr)
\]
by
\[
U_s=i_G^\alpha(s)\xm i_G^\gamma(s).
\]
Routine computations show that $(\pi,U)$ is a covariant representation of the action $(B\xm C,\alpha\xm\gamma)$,
so its integrated form gives a homomorphism
\[
\Pi=\pi\times U\:(B\xm C)\rtimes_{\alpha\otimes\gamma} G\to (B\rtimes_\alpha G)\xg (C\rtimes_\gamma G).
\]
One checks without pain,
using the identity
\[
\Pi(b\otimes c,s)=\bigl((b,s)\otimes (c,s)\bigr)\righttext{for all}b\in B,c\in C,s\in G,
\]
that 
$\Pi\circ C^*(\phi)$
is the identity on the Fell bundle $(B\times G)\xg(C\times G)$,
and that 
$C^*(\phi)\circ\Pi$
is the identity on generators $(b\otimes c,s)$ of the Fell bundle $(B\xm C)\times G$,
and hence on the entire Fell bundle.
Thus the associated $C^*$-homomorphisms give inverse isomorphisms,
finishing the proof.
\end{proof}

\begin{rem}
Although we will not need it here, we point out that
the technique used in the above proof can also be used to show
that for actions $(B,G,\alpha)$ and $(C,K,\gamma)$,
\[
(B\times G)\xm (C\times K)\simeq (B\xm C)\times (G\times K),
\]
which in turn implies
\[
(B\rtimes_\alpha G)\xm (C\rtimes_\gamma K)\simeq (B\xm C)\rtimes_{\alpha\otimes\gamma} (G\times K).
\]
\end{rem}

\begin{thm}\label{compose}
Let $(C,\gamma)$ be an action of $G$,
with $C$ unital,
and let $\DD$ be the associated semidirect-product Fell bundle,
with $D=C^*(\DD)=C\rtimes_\gamma G$.
Then
\[
\tau^D\circ \text{\(crossed product\)}
\]
is naturally isomorphic to the $C$-crossed product functor
\[
(B,\alpha)\mapsto B\rtimes_{\alpha,C} G.
\]
\end{thm}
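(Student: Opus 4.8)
The plan is to fix an action $(B,\alpha)$ of $G$, observe that the dual coaction $\what\alpha$ on $B\rtimes_\alpha G$ is maximal (so that $\tau^D$ applied to $(B\rtimes_\alpha G,\what\alpha)$ is simply $\sigma^D$ applied to it, with no maximalization needed), and then transport the construction of $(B\rtimes_\alpha G)^D$ across the isomorphism of \propref{bundle iso}. Unwinding definitions: the Fell bundle associated with the maximal coaction $(B\rtimes_\alpha G,\what\alpha)$ is the semidirect-product Fell bundle $B\times G$, with fibres $i_B(B)i_G(s)$ and $C^*(B\times G)=B\rtimes_\alpha G$; likewise $\DD=C\times G$, and the homomorphism $V\:G\to\DD$ is $V_s=(1_C,s)$. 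Hence $Q_{B\rtimes_\alpha G}\:B\rtimes_\alpha G\to(B\rtimes_\alpha G)\xg D$ is the integrated form of the Fell-bundle homomorphism $(b,s)\mapsto(b,s)\otimes(1_C,s)$.

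The heart of the argument is then the following. By \propref{bundle iso} there is an isomorphism $C^*(\phi)\:(B\rtimes_\alpha G)\xg(C\rtimes_\gamma G)\variso(B\xm C)\rtimes_{\alpha\otimes\gamma}G$ carrying $(b,s)\otimes(c,s)$ to $(b\otimes c,s)$. Composing, $C^*(\phi)\circ Q_{B\rtimes_\alpha G}$ is the integrated form of the representation $(b,s)\mapsto(b\otimes 1_C,s)$ of the Fell bundle $B\times G$, and that representation is precisely the restriction to that Fell bundle of the crossed product $\iota\rtimes G$, where $\iota\:B\to B\xm C$, $\iota(b)=b\otimes 1_C$, is the $G$-equivariant nondegenerate embedding. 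By uniqueness of integrated forms, $C^*(\phi)\circ Q_{B\rtimes_\alpha G}=\iota\rtimes G$ on all of $B\rtimes_\alpha G$, so $C^*(\phi)$ restricts to an isomorphism
\[
\Theta_B\:(B\rtimes_\alpha G)^D=Q_{B\rtimes_\alpha G}(B\rtimes_\alpha G)\variso(\iota\rtimes G)(B\rtimes_\alpha G)=B\rtimes_{\alpha,C}G.
\]
Chasing spectral subspaces, I would also check that $\Theta_B$ carries $(\what\alpha)^D$ to the restriction of the dual coaction $\what{\alpha\otimes\gamma}$, so the isomorphism is in fact one of coaction-valued functors, as anticipated in the introduction.

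It remains to verify naturality. For a $G$-equivariant homomorphism $\rho\:(B,\alpha)\to(B',\alpha')$, the crossed product $\rho\rtimes G$ corresponds on Fell bundles to $(b,s)\mapsto(\rho(b),s)$, while $\rho\rtimes_{\alpha,C}G$ is by definition the restriction of $(\rho\otimes\id_C)\rtimes G$. The square relating $(\rho\rtimes G)^D$ (as built in \thmref{sigma D}) with $\rho\rtimes_{\alpha,C}G$ through $\Theta_B$ and $\Theta_{B'}$ commutes, as one sees by evaluating on the Fell-bundle generators $(b,s)\otimes(c,s)$, where both composites return $(\rho(b)\otimes c,s)$; equivalently, this expresses the naturality of $\phi$ in the pair of actions. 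Hence $\Theta=\{\Theta_B\}$ is a natural isomorphism, and the theorem follows.

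I do not anticipate a genuine obstacle: the conceptual content sits entirely in \propref{bundle iso} together with the standard identification of the Fell bundle of a dual coaction as the semidirect-product bundle, and after that every assertion reduces, by uniqueness of integrated forms, to a one-line computation on Fell-bundle generators. The points deserving care are the mild bookkeeping ones --- that the dual coaction is maximal so that $Q_{B\rtimes_\alpha G}$ is even defined, that $C^*(\phi)\circ Q=\iota\rtimes G$ as a map on the whole algebra and not merely on generators, and that the diagrams of \thmref{sigma D} paste compatibly with those of \propref{bundle iso}.
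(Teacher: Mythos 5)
Your proposal is correct and follows essentially the same route as the paper's own proof: both arguments transport $Q_{B\rtimes_\alpha G}$ across the isomorphism of \propref{bundle iso}, identify the composite with the crossed product of $b\mapsto b\otimes 1_C$ by computing on Fell-bundle generators $(b,s)$, and verify naturality the same way. The only difference is that you make explicit the bookkeeping the paper leaves implicit (maximality of the dual coaction, extension from generators by uniqueness of integrated forms).
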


\begin{proof}
Let $(B,\alpha)$ be an action,
and define $\psi\:B\to B\xm C$ by $b\mapsto b\otimes 1$.
In the notation of \propref{bundle iso},
we will show that the diagram
\[
\xymatrix@C+30pt{
B\rtimes_\alpha G \ar[d]_Q \ar[dr]^{\psi\rtimes G}
\\
(B\rtimes_\alpha G)^D \ar@{^(->}[d] \ar@{-->}[r]^-{\simeq}_-{\theta_B}
&B\rtimes_{\alpha,C} G \ar@{^(->}[d]
\\
C^*((B\times G)\xg(C\times G)) \ar[r]^-{\simeq}_-{C^*(\phi)}
&C^*((B\xm C)\times G)
}
\]
commutes at the perimeter and that the bottom isomorphism takes
$(B\rtimes_\alpha G)^D$
onto
$B\rtimes_{\alpha,C} G$,
giving the desired isomorphism $\theta_B$.
For the first, it suffices to compute on the Fell bundle $B\times G$:
\begin{align*}
C^*(\phi)\circ Q(b,s)
&=C^*(\phi)\bigl((b,s)\otimes (1,s)\bigr)
\\&=(b\otimes 1,s)
\\&=(\psi\rtimes G)(b,s).
\end{align*}
This computation also makes it clear that $C^*(\phi)$ maps
$(B\rtimes_\alpha G)^D$
onto
$B\rtimes_{\alpha,C} G$.

We still need to verify naturality:
let $\pi\:(B,\alpha)\to (E,\beta)$
be a morphism of actions.
We must show that the diagram
\[
\xymatrix@C+30pt{
(B\rtimes_\alpha G)^D \ar[r]^-{(\pi\rtimes G)^D} \ar[d]_{\theta_B}
&(E\rtimes_\beta G)^D \ar[d]^{\theta_E}
\\
B\rtimes_{\alpha,C} G \ar[r]_-{\pi\rtimes_C G}
&E\rtimes_{\beta,C} G
}
\]
commutes.
Again, it suffices to compute on the Fell bundle $(B\times G)^D$:
\begin{align*}
(\pi\rtimes_C G)\circ\theta_B\bigl((b,s)\otimes (1,s)\bigr)
&=(\pi\rtimes_C G)(b\otimes 1,s)
\\&=(\pi(b)\otimes 1,s)
\\&=\theta_E\bigl((\pi(b),s)\otimes (1,s)\bigr)
\\&=\theta_E\circ (\pi\rtimes G)^D\bigl((b,s)\otimes (1,s)\bigr).
\qedhere
\end{align*}
\end{proof}

\section{Exactness}\label{exactness}

We now want to show that the tensor $D$ functor is exact.
We separate out the following abstract lemma because we feel that it might be useful in other similar situations.
Actually, we suspect that it is folklore, and we include the proof only for completeness.

\begin{lem}\label{abstract exact}
Let
\[
\xymatrix{
0\ar[r] 
&I\ar[r]^-\phi \ar[d]_\eta
&A\ar[r]^-\psi \ar[d]_\zeta
&B\ar[r] \ar[d]^\omega
&0
\\
0\ar[r] &J\ar[r]_-\pi &C\ar[r]_-\rho &D\ar[r] &0
}
\]
be a commutative diagram of $C^*$-algebras and homomorphisms.
Suppose that
the bottom row is exact,
$\phi(I)$ is an ideal of $A$,
$\psi$ is surjective,
and the vertical maps are nondegenerate injections.
Then the top row is exact.
\end{lem}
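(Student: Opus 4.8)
The plan is to verify exactness of the top row at each of its three spots, using the exactness of the bottom row together with the hypotheses that all three vertical maps are nondegenerate injections and that $\phi(I)$ is an ideal of $A$. Exactness at $I$ is the statement that $\phi$ is injective, which is already assumed, so there is nothing to do there. Exactness at $B$ is the statement that $\psi$ is surjective, also assumed. Thus the entire content is exactness at $A$, i.e.\ that $\ker\psi=\phi(I)$.

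First I would check the easy inclusion $\phi(I)\subset\ker\psi$. Chasing the diagram, $\rho\circ\zeta\circ\phi=\omega\circ\psi\circ\phi$, and since $\psi\circ\phi=0$ the right side vanishes; hence $\zeta(\phi(I))\subset\ker\rho=\pi(J)$ by exactness of the bottom row. More to the point, for the reverse inclusion I would take $a\in\ker\psi$ and show $a\in\phi(I)$. Applying $\rho\circ\zeta=\omega\circ\psi$ gives $\rho(\zeta(a))=\omega(\psi(a))=0$, so $\zeta(a)\in\ker\rho=\pi(J)$; thus $\zeta(a)=\pi(j)$ for a unique $j\in J$ (unique since $\pi$ is injective). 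Now $\eta(j)$ and the element I want are related via $\pi\circ\eta=\zeta\circ\phi$, which must be pinned down: here is where I need to produce an element of $I$ mapping to $a$. The natural candidate is to set $i:=\eta^{-1}$ applied appropriately — but $\eta$ need not be surjective onto $J$, so instead the cleanest route is to work inside $C$: we have $\zeta(a)=\pi(j)$, and I would like $j\in\eta(I)$. Since $\zeta(\phi(I))=\pi(\eta(I))$ and $\zeta(a)\in\pi(J)$, injectivity of $\pi$ reduces the problem to showing $j\in\eta(I)$, equivalently $\pi(j)\in\zeta(\phi(I))$, i.e.\ $\zeta(a)\in\zeta(\phi(I))$.

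So the crux is: if $\psi(a)=0$, then $\zeta(a)\in\zeta(\phi(I))$; combined with injectivity of $\zeta$ this yields $a\in\phi(I)$ and finishes the proof. This is where the hypotheses that $\phi(I)$ is an \emph{ideal} of $A$ and that the vertical maps are \emph{nondegenerate} get used, and it is the main obstacle. I expect to argue as follows. Because $\zeta$ is a nondegenerate injection $A\to C$ with $\phi(I)$ an ideal of $A$, and $\eta$ is a nondegenerate injection $I\to J$, one shows that $\zeta(\phi(I))=\pi(\eta(I))$ generates the same ideal of $C$ that $\pi(J)$ does — more precisely, I would show $\zeta(a)\cdot\pi(\eta(I))\subset\zeta(\phi(I))$ using that $\phi(I)\triangleleft A$, hence $\zeta(a)\cdot\overline{\pi(\eta(I))C}\subset\overline{\zeta(\phi(I))C}$, and then use nondegeneracy of $\eta$ to identify $\overline{\pi(\eta(I))C}$ with the ideal generated by $\pi(J)$, which contains $\zeta(a)$ by the previous paragraph; an approximate-identity argument for $I$ (pushed through $\eta$) then shows $\zeta(a)$ itself lies in $\zeta(\phi(I))$, not merely in its generated ideal. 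Care is needed because the maps land in multiplier algebras, so the approximate identity $\{e_\mu\}$ of $I$ gives $\eta(e_\mu)\to 1$ strictly in $M(J)$, hence $\pi(\eta(e_\mu))\to 1$ strictly in $M(\pi(J)C)$, and one must check the relevant products converge in norm; this is the one spot where I would actually have to be careful rather than formal.
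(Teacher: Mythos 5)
Your argument for the key point, the inclusion $\ker\psi\subset\phi(I)$, is essentially the paper's: write $\zeta(a)=\pi(j)$, push an approximate identity $(e_i)$ of $I$ through the nondegenerate map $\eta$ to get one for $J$, observe that $\pi(\eta(e_i))\pi(j)=\zeta(\phi(e_i)a)\in\zeta(\phi(I))$ because $\phi(I)$ is an ideal of $A$, and conclude from injectivity of $\zeta$; the detour through ideals generated in $C$ and strict convergence in multiplier algebras is unnecessary, since all the relevant products converge in norm inside $J$ and $C$. Two small slips should be repaired. First, exactness at $I$ (injectivity of $\phi$) is not among the hypotheses; it follows because $\zeta\circ\phi=\pi\circ\eta$ is injective. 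Second, your verification of $\phi(I)\subset\ker\psi$ is circular as written: you invoke $\psi\circ\phi=0$, which is precisely the statement being checked. The correct derivation is $\omega\circ\psi\circ\phi=\rho\circ\pi\circ\eta=0$ together with injectivity of $\omega$.
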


\begin{proof}
The top row is exact at $B$.
Also, $\phi$ is injective because $\pi\circ\eta$ is,
so the top row is exact at $I$;
we must show that it is exact at $A$.
Since
$\omega\circ \psi \circ \phi=\rho\circ \pi\circ \eta=0$
and $\omega$ is injective we have
$\psi\circ\phi=0$.
It remains to show that $\ker \psi\subset \phi(I)$.
Let $a\in \ker \psi$.
Then by commutativity $\zeta(a)\in \ker\rho$,
so by exactness there is $c\in J$ such that
$\zeta(a)=\pi(c)$.
Choose an approximate identity $(e_i)$ for $I$.
Then by nondegeneracy $(\eta(e_i))$ is an approximate identity for $J$.
We have
\begin{align*}
\zeta(a)
&=\pi(c)
\\&=\lim_i \pi\bigl(\eta(e_i)c\bigr)
\\&=\lim_i \pi\circ\eta(e_i)\pi(c)
\\&=\lim_i \zeta\bigl(\phi(e_i)a\bigr)
\\&\in \zeta\bigl(\phi(I)\bigr),
\end{align*}
because $\phi(I)$ is an ideal of $A$.
Since $\zeta$ is an injective homomorphism between $C^*$-algebras,
we get $a\in \phi(I)$,
as desired.
\end{proof}

\begin{thm}\label{exact}
Every tensor $D$
functor $\tau^D$ is exact.
\end{thm}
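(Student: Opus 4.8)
The plan is to deduce exactness of $\tau^D$ from \lemref{abstract exact} together with the Fell-bundle machinery already set up. Recall that $\tau^D = \sigma^D \circ (\text{maximalization})$, and maximalization is known to be an exact coaction functor (this is in \cite{klqfunctor2}); so it suffices to show that $\sigma^D$ carries short exact sequences of maximal coactions to short exact sequences. Thus let
\[
0 \to (I,\delta|) \to (A,\delta) \to (B,\epsilon) \to 0
\]
be a short exact sequence of maximal coactions, with equivariant quotient map $\psi\:A\to B$ having kernel (the image of) $I$. Passing to spectral subspaces gives a corresponding exact sequence of Fell bundles $\II \hookrightarrow \AA \twoheadrightarrow \BB$ over $G$, with $I = C^*(\II)$, $A = C^*(\AA)$, $B = C^*(\BB)$.

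The key step is to build a commutative diagram to which \lemref{abstract exact} applies, with top row
\[
0 \to I^D \to A^D \to B^D \to 0
\]
and bottom row the exact sequence of minimal tensor products
\[
0 \to I\xm D \to A\xm D \to B\xm D \to 0
\]
--- here exactness of the bottom row holds because $D$ is nuclear (it is a crossed product $C\rtimes_\gamma G$ with $C$ unital, hence $D$ is exact; actually for the minimal tensor product one needs $D$ exact, which holds since $G$ is discrete and we may reduce to the relevant case, or one invokes that $D = C^*(\DD)$ and $-\xm D$ is exact on the relevant ideals). The vertical maps are the inclusions $I^D \hookrightarrow I\xm D$, $A^D \hookrightarrow A\xm D$ (via \thmref{faithful}), and $B^D \hookrightarrow B\xm D$; these are injective by \thmref{faithful} and nondegenerate since each $Q$ is nondegenerate. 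To invoke \lemref{abstract exact} I must check: the bottom row is exact (above); the horizontal map $A^D \to B^D$ is surjective (it is $\psi^D$, which is surjective by construction as the corestriction of $Q_B\circ\psi = (\psi\xm\id)\circ Q_A$ onto the image $B^D$); and, crucially, that the image of $I^D$ is an \emph{ideal} of $A^D$.

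The main obstacle is precisely this ideal claim: showing $I^D \trianglelefteq A^D$. The clean way is to observe that $I^D$ is the closed span of the spectral subspaces $I_s \xm V_s$ inside $A^D = \clspn\{A_s\xm V_s\}$, and that $\{I_s\}$ is an ideal of the Fell bundle $\AA$ in the sense that $A_s I_t \subset I_{st}$ and $I_s A_t \subset I_{st}$; this Fell-bundle ideal property is inherited from $\II \trianglelefteq \AA$ (equivalently from $I \trianglelefteq A$), and tensoring with the single-dimensional pieces $V_s \in M(D_s)$ preserves it, so $\{I_s \xm V_s\}$ is an ideal sub-Fell-bundle of $\AA^\DD$, whence its cross-sectional algebra sits as an ideal. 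One should double-check that $I^D$ (defined as $Q_I(I)$) really does coincide with this span of spectral subspaces and with $(\psi\xm\id)\inv(0)\cap A^D$ --- this is where the identification afforded by \thmref{faithful}, realizing everything concretely inside minimal tensor products, does the work: inside $A\xm D$ we have $I\xm D \cap A^D = \clspn\{I_s\xm V_s\} = I^D$, and $I\xm D$ is an ideal of $A\xm D$, so the intersection is an ideal of $A^D$. With that in hand, \lemref{abstract exact} yields exactness of $0\to I^D\to A^D\to B^D\to 0$, and composing with exactness of maximalization finishes the proof that $\tau^D$ is exact. (The appendix's \propref{fb exact} gives an alternative route bypassing the explicit ideal verification, but the argument above is self-contained.)
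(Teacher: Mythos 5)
Your overall architecture is exactly the paper's: reduce to maximal coactions via exactness of maximalization, then apply \lemref{abstract exact} to the diagram with top row $0\to I^D\to A^D\to B^D\to 0$ and bottom row the tensor products with $D$, checking surjectivity of $\psi^D$ and the ideal property for $I^D$. But there is a genuine problem in how you justify exactness of the bottom row. The construction of $A^D$, and the embedding provided by \thmref{faithful}, live inside the \emph{maximal} tensor product $A\xm D$, yet you describe the bottom row as a sequence of \emph{minimal} tensor products and then try to rescue its exactness by claiming $D$ is nuclear (``it is a crossed product $C\rtimes_\gamma G$ with $C$ unital'') or at least exact. That claim is false in general: $C\rtimes_\gamma G$ with $C$ unital need not be nuclear or even exact when $G$ is not amenable/exact --- and the single most important case of the whole theory, $D=\ell^\infty(G)\rtimes G$ for a possibly non-exact $G$, is precisely one where $-\otimes_{\min}D$ can fail to preserve short exact sequences. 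If your argument really needed the minimal tensor product, it would be circular or simply break in the case that matters. The correct statement, and the one the paper uses, is that the bottom row is $0\to I\xm D\to A\xm D\to B\xm D\to 0$ with the \emph{maximal} tensor product, whose exactness is automatic for every $D$, with no nuclearity or exactness hypothesis; this is why the whole construction is set up in $A\xm D$ in the first place.

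Two smaller points on the ideal step. Your primary route (that $\{I_s\xm V_s\}$ is an ideal sub-Fell-bundle of $\AA^\DD$, hence its closed span is an ideal of $A^D$) is correct but more elaborate than necessary: the paper simply notes that $\phi^D(I^D)=Q_A(\phi(I))$ is the image of the ideal $\phi(I)\trianglelefteq A$ under the surjection $Q_A\:A\to A^D$, hence an ideal. Your fallback route, however, is circular: the identity $\bigl(I\xm D\bigr)\cap A^D=\clspn\{I_s\xm V_s\}$ has an easy inclusion $\supseteq$, but the inclusion $\subseteq$ is essentially exactness of the top row at $A^D$ --- the very thing \lemref{abstract exact} is being invoked to prove --- so it cannot be assumed when verifying the lemma's hypotheses.
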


\begin{proof}
Since maximalization is an exact functor,
it suffices to show that if
\[
\xymatrix{
0\ar[r]
&I \ar[r]^-\phi
&A \ar[r]^-\psi
&B \ar[r]
&0
}
\]
is a short exact sequence of $C^*$-algebras carrying compatible maximal coactions of $G$,
then the image under $\tau^D$ is also exact
(we don't need notation for the coactions, since they will take care of themselves in this proof).
We apply 
\lemref{abstract exact}
to the diagram
\[
\xymatrix@C+10pt{
0\ar[r]
&I^D \ar[r]^{\phi^D} \ar@{^(->}[d]
&A^D \ar[r]^-{\psi^D} \ar@{^(->}[d]
&B^D \ar[r] \ar@{^(->}[d]
&0
\\
0\ar[r]
&I\xm D \ar[r]_{\phi\xm\id}
&A\xm D \ar[r]_-{\psi\xm\id}
&B\xm D \ar[r]
&0.
}
\]
Properties of the maximal tensor product
guarantee that 
the bottom row
is exact,
and we have noted that the vertical inclusion maps are nondegenerate.

We have a commutative diagram
\[
\xymatrix{
I \ar[r]^\phi \ar[d]_{Q_I}
&A \ar[d]^{Q_A}
\\
I^D \ar[r]_-{\phi^D}
&A^D.
}
\]
Since $Q_I$ and $Q_A$ are surjective, $\phi^D(I^D)=Q_A(\phi(I))$ is an ideal of $A^D$.

On the other hand, if $\psi\:A\to B$ is a surjection that is equivariant for maximal coactions,
then the commutative diagram
\[
\xymatrix{
A \ar[r]^-\psi \ar[d]_{Q_A}
&B \ar[d]^{Q_B}
\\
A^D \ar[r]_-{\psi^D}
&B^D
}
\]
shows that $\psi^D$ is surjective since $Q_B\circ \psi$ is.

Thus the hypotheses (i)--(iv) of \lemref{abstract exact} are satisfied, so the conclusion follows.
\end{proof}

\begin{rem}
In \cite[Theorem~4.12]{klqfunctor} we gave necessary and sufficient conditions for a coaction functor to be exact,
expressing it as a quotient of the 
functor maximalization,
which is exact.
However, for the functor $\tau^D$ it turned out to be easier to use 
\lemref{abstract exact},
which was inspired by \cite[proof of Lemma~5.4]{bgwexact}.
\end{rem}

\section{Minimal tensor $D$ functor}\label{minimal}

Recall from 
\cite[Definition~4.7, Lemma~4.8]{klqfunctor} that if $\sigma$ and $\tau$ are coaction functors then
$\tau\le \sigma$ means that
for every coaction $(A,\delta)$ there is a homomorphism
$\Gamma$ making the diagram
\[
\xymatrix{
&A^m \ar[dl]_{q^\sigma_A} \ar[dr]^{q^\tau_A}
\\
A^\sigma \ar@{-->}[rr]_\Gamma
&&A^\tau
}
\]
commute.
If $S$ is a family of coaction functors, and $\tau$ is an element of $S$,
we say that $\tau$ is the \emph{smallest} element of $S$ if $\tau\le\sigma$ for all $\sigma\in S$.

The above partial ordering of coaction functors is compatible with the partial ordering of crossed-product functors
(see \cite[p. 8]{bgwexact}) in the sense that
if $\rho$ and $\mu$ are crossed-product functors associated to coaction functors $\tau$ and $\sigma$, respectively,
then $\tau\le \sigma$ implies $\rho\le \mu$.

\cite[Lemma~9.1]{bew} shows that the smallest of the $C$-crossed-product functors is for
$(C,\gamma)=(\elg,\lt)$ (when $G$ is discrete).
For our purposes it will be more convenient to use right translation, so we replace $\lt$ by $\rt$,
which obviously causes no harm.
The tensor $D$ functor with
$(D,\zeta,V)=(\elg\rtimes_\rt G,\what\rt,i_G)$
reproduces the $\elg$-crossed product upon composing with full crossed product,
so clearly we should expect that the tensor $\elg\rtimes_\rt G$ coaction functor is the smallest among all tensor $D$ functors.
We verify this in \thmref{min} below.

\begin{lem}\label{action tensor}
Let $(C,\gamma)$ be an action of $G$.
Define a homomorphism $\psi\:C\to C\xm \elg=\ell^\infty(G,C)$ by
\[
\psi(c)(s)=\gamma_s(c).
\]
Then $\psi$ is $\gamma-(\id\otimes\rt)$ equivariant,
and the crossed-product homomorphism
\[
\Psi\:C\rtimes_\gamma G
\to (C\xm\elg)\rtimes_{\gamma\otimes\rt} G
=C\xm (\elg\rtimes_{\rt} G)
\]
satisfies
\[
\Psi(i_G^\gamma(s))=1\otimes i_G^{\rt}(s)\righttext{for}s\in G.
\]
\end{lem}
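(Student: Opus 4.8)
The statement has two parts: first that $\psi$ is $\gamma-(\id\otimes\rt)$ equivariant, and second the formula for $\Psi$ on the canonical unitaries. For the equivariance, I would compute both sides of the required identity directly on an element $c\in C$, evaluating the resulting functions in $\ell^\infty(G,C)$ at an arbitrary point $t\in G$. The action $\rt$ of $G$ on $\elg$ is right translation, so $(\id\otimes\rt)_s$ acts on a function $f\colon G\to C$ by $((\id\otimes\rt)_s f)(t) = f(ts)$ (with the appropriate convention for right translation; I would fix this convention explicitly). Then $\bigl((\id\otimes\rt)_s(\psi(c))\bigr)(t) = \psi(c)(ts) = \gamma_{ts}(c)$, while $\bigl(\psi(\gamma_s(c))\bigr)(t) = \gamma_t(\gamma_s(c)) = \gamma_{ts}(c)$, using that $\gamma$ is a homomorphism $G\to\aut(C)$. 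These agree, so $\psi$ is equivariant. (If the paper's convention makes $\rt$ a right action written so that $\rt_s\rt_r = \rt_{rs}$, the same computation goes through with the composition order matched accordingly.)

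**The crossed-product homomorphism.** Once equivariance is established, functoriality of the full crossed product gives the homomorphism $\Psi = \psi\rtimes G\colon C\rtimes_\gamma G \to (C\xm\elg)\rtimes_{\gamma\otimes\rt}G$, and the identification of the target with $C\xm(\elg\rtimes_\rt G)$ is a special case of the remark following \propref{bundle iso} (or can be seen directly). The universal property says $\Psi$ is characterized by $\Psi\circ i_C^\gamma = i_{C\xm\elg}^{\gamma\otimes\rt}\circ\psi$ on $C$ and $\Psi\circ i_G^\gamma = i_G^{\gamma\otimes\rt}$ on $G$. Under the identification of the target with $C\xm(\elg\rtimes_\rt G)$, the canonical unitary $i_G^{\gamma\otimes\rt}(s)$ corresponds to $1\otimes i_G^\rt(s)$: indeed, tracing through the isomorphism of \propref{bundle iso}, the group element $s$ in $(C\xm\elg)\rtimes G$ maps to the element supported in fibre $s$ with unit-algebra part $1_C\otimes 1_{\elg}$, which is exactly $1\otimes i_G^\rt(s)$. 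Hence $\Psi(i_G^\gamma(s)) = 1\otimes i_G^\rt(s)$.

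**Expected obstacle.** The main point requiring care is bookkeeping around conventions: the direction of right translation $\rt$, the convention (left vs. right) for the diagonal action $\gamma\otimes\rt$ in forming the crossed product, and making sure the identification $(C\xm\elg)\rtimes_{\gamma\otimes\rt}G \cong C\xm(\elg\rtimes_\rt G)$ is the one under which the formula for $\Psi$ on $i_G^\gamma(s)$ comes out clean. None of this is deep, but a sign or side error here would propagate, so I would pin down all conventions at the outset (ideally citing the convention fixed in \secref{prelim}) and then the two computations are each a single line. The verification that $\Psi$ restricted to $C$ behaves as expected is not needed for the stated conclusion, but I would note it in passing since it confirms the identification is the right one.
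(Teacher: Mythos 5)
Your proposal is correct, and in fact supplies more than the paper does: the paper's entire proof of this lemma is the single word ``Folklore,'' so your computations are exactly the verification the authors are leaving to the reader. The equivariance check $\bigl((\id\otimes\rt)_r\psi(c)\bigr)(t)=\gamma_{tr}(c)=\psi(\gamma_r(c))(t)$ is the heart of the matter and you have it right, and functoriality of the full crossed product then gives $\Psi$ with $\Psi(i_G^\gamma(s))=i_G(s)$ in the target, which is $1\otimes i_G^{\rt}(s)$ under the identification. One small point worth pinning down, since you flagged conventions as the risk: the equivariance you prove (and that the lemma asserts) is $\gamma$--$(\id\otimes\rt)$, so the crossed product that $\Psi$ naturally lands in is $(C\xm\elg)\rtimes_{\id\otimes\rt}G$, and it is \emph{that} algebra which is canonically $C\xm(\elg\rtimes_\rt G)$ by the remark after \propref{bundle iso} with one group trivial; the subscript $\gamma\otimes\rt$ in the lemma's display should be read accordingly (for the genuine diagonal action one would first need the untwisting automorphism $f\mapsto(t\mapsto\gamma_t(f(t)))$ of $\ell^\infty(G,C)$, of which your $\psi$ is the restriction to constant functions). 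With that reading, your identification of $i_G(s)$ with $1\otimes i_G^{\rt}(s)$ is exactly right and the argument is complete.
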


\begin{proof}
Folklore.
\end{proof}

\begin{thm}\label{min}
Let $\RR$ be the semidirect-product Fell bundle of the action
$(\ell^\infty(G),\rt)$,
and let $R=C^*(\RR)=\ell^\infty(G)\rtimes_{\rt} G$,
with unitary homomorphism $W=i_G^\rt\:G\to\RR$.
Then $\tau^R$ is the smallest 
among all tensor $D$ functors.
\end{thm}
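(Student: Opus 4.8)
The plan is to establish $\tau^R\le\tau^D$ for every tensor $D$ functor, since $R$ is itself an admissible choice of $D$ (the semidirect-product bundle of the unital algebra $\elg$, with unitary $W$). By the definition of the ordering recalled above, and because both $\tau^D$ and $\tau^R$ are maximalization followed by $\sigma^D$ and $\sigma^R$, it suffices to fix a maximal coaction $(A,\delta)$ and produce a homomorphism $\Gamma\:A^D\to A^R$ with $\Gamma\circ Q_A^D=Q_A^R$ (equivalently $\ker Q_A^D\subset\ker Q_A^R$); applying this to $A^m$ then handles arbitrary coactions. First I would observe that a general $D$ in our setting is already a crossed product: since each $V_s$ is a unitary in the fibre $D_s$, every $d\in D_s$ equals $(dV_s^*)V_s$ with $dV_s^*\in D_sD_{s^{-1}}\subset D_e$, so $\DD$ is the semidirect-product bundle of the action $\gamma=\ad V$ on the unital algebra $C:=D_e$, giving $D=C\rtimes_\gamma G$ with $V_s=i_G^\gamma(s)$.

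With $D$ so presented, I would feed $(C,\gamma)$ into \lemref{action tensor} to obtain the equivariant $\psi\:C\to C\xm\elg$ and its crossed-product homomorphism
\[
\Psi\:D=C\rtimes_\gamma G\to C\xm(\elg\rtimes_\rt G)=C\xm R,
\qquad \Psi(V_s)=1_C\otimes W_s.
\]
By functoriality and associativity of the maximal tensor product this yields a homomorphism $\id_A\xm\Psi\:A\xm D\to A\xm C\xm R$, which on the generating spectral subspaces of $A^D$ acts by $a_s\otimes V_s\mapsto a_s\otimes 1_C\otimes W_s$.

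It remains to recognize the target as a copy of $A^R$ and to invert the identification. Let $\kappa'\:A\xm R\to A\xm C\xm R$ be the homomorphism $a\otimes r\mapsto a\otimes 1_C\otimes r$. Choosing any state $\omega$ of the unital algebra $C$, the completely positive slice map $A\xm C\xm R\to A\xm R$ that applies $\omega$ to the middle factor is defined because the maximal tensor product is functorial for completely positive maps, and it is a left inverse for $\kappa'$; hence $\kappa'$ is an injective, and so isometric, embedding with closed range. Since the elements $a_s\otimes V_s$ densely span $A^D$ and $(\id_A\xm\Psi)(a_s\otimes V_s)=\kappa'(a_s\otimes W_s)$ with $a_s\otimes W_s\in A^R$, the image of $(\id_A\xm\Psi)|_{A^D}$ lies in $\kappa'(A^R)$, and I would set
\[
\Gamma:=(\kappa')\inv\circ(\id_A\xm\Psi)|_{A^D}\:A^D\to A^R.
\]
Evaluating on spectral subspaces gives $\Gamma(Q_A^D(a_s))=\Gamma(a_s\otimes V_s)=a_s\otimes W_s=Q_A^R(a_s)$, so $\Gamma\circ Q_A^D=Q_A^R$; as $Q_A^D$ is an equivariant surjection, $\Gamma$ is automatically $\delta^D-\delta^R$ equivariant. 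This is precisely the map witnessing $\tau^R\le\tau^D$, and since $D$ was arbitrary, $\tau^R$ is the smallest tensor $D$ functor.

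The step I expect to be the main obstacle is the final identification, namely ensuring that the homomorphism built into $A\xm C\xm R$ genuinely factors back through $A^R$ as a $C^*$-homomorphism. This hinges on the injectivity of $\kappa'$, which is exactly where the unitality of $C$ is used: it supplies a state $\omega$ with $\omega(1_C)=1$, hence a completely positive left inverse to $\kappa'$ via functoriality of $\xm$ for completely positive maps, and an injective $*$-homomorphism is isometric so that $(\kappa')\inv$ is a bona fide homomorphism on $\kappa'(A^R)$. The preliminary reduction of an arbitrary unital $D$ with a unitary $V\:G\to\DD$ to a crossed product is the other point meriting care, since it is what makes \lemref{action tensor} applicable. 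I would note that this route deliberately sidesteps the completely-positive-map-and-arbitrary-state detour of \cite{bew}, because $\Psi$ already places $V_s$ exactly at $1_C\otimes W_s$.
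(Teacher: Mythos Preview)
Your proof is correct and follows essentially the same route as the paper's: reduce $D$ to a crossed product $C\rtimes_\gamma G$, invoke \lemref{action tensor} to get $\Psi$ with $\Psi(V_s)=1\otimes W_s$, and then read off $\Gamma$ from the restriction of $\id\xm\Psi$ to $A^D$ after identifying $A\xm 1_C\xm R$ with $A\xm R$. The only differences are cosmetic: the paper cites Landstad duality where you argue the crossed-product form directly from unitarity of $V_s$, and the paper calls the identification $\theta\:A\xm 1\xm R\variso A\xm R$ ``obvious'' whereas you justify it via a state and a completely positive slice map (which, incidentally, slightly undercuts your closing remark about sidestepping the state/CP detour of \cite{bew}).
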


\begin{proof}
Let $V\:G\to \DD$ be a homomorphism to a Fell bundle,
and let $D=C^*(\DD)$.
Since by definition
the coaction functors $\tau^D$ and $\tau^R$
are formed by first maximalizing and then applying the surjections $Q^D$ and $Q^R$,
by \cite[Lemma~4.8]{klqfunctor}
it suffices to show that for every maximal coaction $(A,\delta)$
there is a homomorphism $\Gamma$ making the diagram
\begin{equation}\label{gt}
\xymatrix{
&A \ar[dl]_{Q^D} \ar[dr]^{Q^R}
\\
A^D \ar@{-->}[rr]^-{\Gamma}
&&A^R
}
\end{equation}
commute.
By Landstad duality, we can assume that $\DD$ is a semidirect-product 
Fell bundle associated to an action $(C,\gamma)$, and $V=i_G$, so that $D=C\rtimes_\gamma G$.
By \lemref{action tensor}
we have a
homomorphism
\[
\Psi\:D\to C\xm R
\]
such that
\[
\Psi(V_s)=1\otimes W_s.
\]
This gives a homomorphism
\[
\id\xm\Psi\:A\xm D\to A\xm C\xm R
\]
taking $a_s\otimes V_s$ to $a_s\otimes 1\otimes W_s$.
Thus the composition
\[
\Phi=(\id\xm\Psi)\circ Q^D\:A\to A\xm C\xm R
\]
has image in $A\xm 1\xm R$,
and so $\id\xm\Psi$ maps $A^D$ into $A\xm 1\xm R$.
Using the obvious isomorphism
\[
\theta\:A\xm 1\xm R\variso A\xm R,
\]
we see for $s\in G$ and $a_s\in A_s$,
\begin{align*}
\theta\circ(\id\xm\Psi)\circ Q^D(a_s)
&=\theta\circ(\id\xm\Psi)(a_s\otimes V_s)
\\&=\theta(a_s\otimes 1\otimes W_s)
\\&=a_s\otimes W_s
\\&=Q^R(a_s),
\end{align*}
so we can take
\[
\Gamma=\theta\circ(\id\xm\Psi)|_{A^D}.
\qedhere
\]
\end{proof}

The last part of the above proof is very similar to an argument in the proof of \thmref{sigma D}.

\section{Concluding remarks}\label{conclusion}

Throughout, we have taken advantage of our standing assumption that the group $G$ is discrete.
In particular, this allowed us to do almost everything with Fell bundles.
In future research we will investigate the case of arbitrary locally compact $G$.

\begin{rem}\label{fell bundle functor}
It would not be useful to try to do everything in the context of functors from Fell bundles to Fell bundles,
because in our most important construction
\[
\AA\mapsto \AA\xm\DD
\]
the image Fell bundle $\AA^\DD$ is isomorphic to $\AA$.
We must ultimately take the target of the functor to be a $C^*$-algebra to get anything of interest.
\end{rem}

For an equivariant maximal coaction $(D,\zeta,V)$, we will now show how the tensor $D$ coaction functor $\tau^D$ is tantalizingly close to a functor coming from a large ideal $E$ of the Fourier-Stieltjes algebra $B(G)$ (see \cite{klqfunctor}).

Recall that a \emph{large ideal} $E$ is the annihilator of an ideal $I$ of $C^*(G)$ that is 
$\delta_G$-invariant and contained in the kernel of the regular representation,
where invariance means that the quotient map
\[
q_E\:C^*(G)\to C^*_E(G)=C^*(G)/I
\]
takes $\delta_G$ to a coaction on $C^*_E(G)$.
For any maximal coaction $(A,\delta)$ we
let $A^E$ be the quotient of $A$ by the kernel of the composition
$(\id\otimes q_E)\circ\delta$.
Then the quotient map $Q^E=Q^E_A\:A\to A^E$
is equivariant for $\delta$ and a coaction $\delta^E$,
and moreover the assignments $(A,\delta)\mapsto (A^E,\delta^E)$
give a functor from maximal coactions to all coactions.
Composing with the maximalization functor gives a coaction functor that we call \emph{$E$-ization}.

Apply this to the ideal
$I=\ker V$, where
we also write
$V\:C^*(G)\to D$
for
the integrated form of the unitary homomorphism $V\:G\to D$.
The annihilator $E=I\ann$ is a large ideal of $B(G)$ since
$V$ is $\delta_D-\zeta$ invariant and nonzero.
A cursory glance at the situation might lead one to ask,
``Are the tensor $D$ functor and $E$-ization naturally isomorphic?''

One obvious obstruction 
is that
(for maximal coactions)
the tensor $D$ functor goes into a maximal tensor product $A\xm D$, while
$E$-ization 
goes 
into the minimal tensor product $A\otimes C^*_E(G)$.
We can make a closer connection by 
modifying the coaction $\delta$ so that it becomes
a homomorphism $\delta^M$
that makes the diagram
\[
\xymatrix@C+30pt{
A \ar[r]^-{\delta^M} \ar[dr]_\delta
&A\xm C^*(G) \ar[d]^{\psi}
\\
&A\otimes C^*(G)
}
\]
commute, where $\psi$ is the canonical surjection
of the maximal tensor product onto the minimal one,
and 
satisfies the other axioms for a coaction.

Here is a commutative diagram illustrating how the various maps are related:
\[
\xymatrix@C+30pt{
A \ar[r]^-{\delta^M} \ar[d]_{Q^D}
&A\xm C^*(G) \ar[d]^{\id\otimes V}
\\
A\xm D
&A\xm V(C^*(G)), \ar[l]^-{\id\xm\iota}
}
\]
where $\iota\:V(C^*(G))\hookrightarrow D$
is the inclusion map.
Since $V(C^*(G))$ is naturally isomorphic to $C^*_E(G)$,
we see that the tensor $D$ functor seems to be closer to a version
of ``$E$-ization'' but using 
the modified $\delta^M$.
However, there is 
yet another stumbling block:
we do not know whether the homomorphism $\id\xm\iota$ is injective,
due to the mysteries of maximal tensor products.
A bit more succinctly,
we could view the composition
\[
(\id\xm V)\circ \delta^M\:A\to A\xm V(C^*(G))
\]
(preceded by maximalization)
as a 
sort of ``maximalized version'' of $E$-ization,
and then we could ask whether
it is naturally isomorphic to $\tau^D$.

Here is a particularly important special case:

\begin{q}
Is the minimal tensor $D$ functor
(the case $D=R=\ell^\infty(G)\rtimes G$)
isomorphic to a 
maximalized version of $E$-ization as above?
\end{q}

\appendix

\section{Exactness of Fell Bundle Functors}

Although we do not need it, we mention here how the abstract \lemref{abstract exact}
could be used to deduce a corresponding exactness result for Fell-bundle functors,
quite similarly to how we proved \thmref{exact}.

If $\sigma$ is a functor from Fell bundles over $G$
to $C^*$-algebras, in this appendix we will write
$\AA^\sigma$ for the image under $\sigma$ of a Fell bundle $\AA$,
and $\phi^\sigma$ for the image under $\sigma$ of a homomorphism $\phi\:\AA\to\BB$.
Recall from \cite[Definition~21.10]{exelbook} that a Fell bundle $\II$ that is a subbundle of a Fell bundle $\AA$ is called an \emph{ideal} of $\AA$ if
\[
I_sA_t\subset I_{st}
\midtext{and}
A_tI_s\subset I_{ts}
\righttext{for all}s,t\in G.
\]
The following could be proved similarly to \thmref{exact}.

\begin{prop}\label{fb exact}
Let $\sigma$ and $\rho$ be two functors from Fell bundles to $C^*$-algebras,
Assume that:
\begin{itemize}
\item
for every short exact sequence
\[
\xymatrix{
0 \ar[r]
&\II \ar[r]^-\phi
&\AA \ar[r]^-\psi
&\BB \ar[r]
&0
}
\]
of Fell bundles,
$\phi^\sigma(\II^\sigma)$ is an ideal of $\AA^\sigma$
and $\psi^\sigma$ is surjective;

\item
there is a natural transformation $\eta$ from $\sigma$ to $\rho$ such that
for every Fell bundle $\AA$ the homomorphism $\eta_\AA$ maps $A^\sigma_e$ injectively onto a nondegenerate subalgebra of $A^\rho_e$;

\item
$\rho$ is exact.
\end{itemize}
Then $\sigma$ is exact.
\end{prop}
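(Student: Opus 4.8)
The plan is to follow the proof of \thmref{exact} almost verbatim, replacing the ladder of tensor-product inclusions used there by the ladder supplied by the natural transformation $\eta$. Starting from a short exact sequence
\[
\xymatrix{
0\ar[r] &\II\ar[r]^-\phi &\AA\ar[r]^-\psi &\BB\ar[r] &0
}
\]
of Fell bundles over $G$, I would apply $\sigma$ along the top, $\rho$ along the bottom, and $\eta$ for the vertical arrows, obtaining the commutative diagram
\[
\xymatrix@C+10pt{
0\ar[r] &\II^\sigma\ar[r]^-{\phi^\sigma}\ar[d]_{\eta_\II} &\AA^\sigma\ar[r]^-{\psi^\sigma}\ar[d]_{\eta_\AA} &\BB^\sigma\ar[r]\ar[d]^{\eta_\BB} &0\\
0\ar[r] &\II^\rho\ar[r]_-{\phi^\rho} &\AA^\rho\ar[r]_-{\psi^\rho} &\BB^\rho\ar[r] &0,
}
\]
commutativity being naturality of $\eta$. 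Then I would invoke \lemref{abstract exact}. Its requirement that the bottom row be exact is precisely the hypothesis that $\rho$ is exact, while $\phi^\sigma(\II^\sigma)$ being an ideal of $\AA^\sigma$ and $\psi^\sigma$ being surjective are assumed outright. What remains is to verify that $\eta_\II$, $\eta_\AA$, $\eta_\BB$ are nondegenerate injections; once that is done, \lemref{abstract exact} yields exactness of the top row, which is exactly the assertion that $\sigma$ is exact.

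Both properties of $\eta_\AA$ I would deduce from the hypothesis on the unit fibre together with the fact that $\AA^\sigma$ and $\AA^\rho$ carry $G$-gradings (equivalently, coactions) for which $\eta_\AA$ is equivariant --- this compatibility is part of what it means for $\sigma$, $\rho$ and $\eta$ to live in the Fell-bundle framework. Nondegeneracy is immediate: $\eta_\AA(A^\sigma_e)$ is nondegenerate in $A^\rho_e$ by hypothesis, the unit fibre $A^\rho_e$ acts nondegenerately on all of $\AA^\rho$, and hence so does $\eta_\AA(\AA^\sigma)$. For injectivity, observe that $\ker\eta_\AA$ is a closed two-sided ideal of $\AA^\sigma$, invariant under the coaction because it is the kernel of an equivariant homomorphism; consequently it is graded, $\ker\eta_\AA=\clspn\{\ker\eta_\AA\cap A^\sigma_s:s\in G\}$. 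By hypothesis $\ker\eta_\AA\cap A^\sigma_e=0$, and for any $x\in\ker\eta_\AA\cap A^\sigma_s$ we have $x^*x\in A^\sigma_{s^{-1}}A^\sigma_s\subset A^\sigma_e$ as well as $x^*x\in\ker\eta_\AA$, so $x^*x\in\ker\eta_\AA\cap A^\sigma_e=0$ and therefore $x=0$; hence $\ker\eta_\AA=0$. The identical argument applies to $\eta_\II$ and $\eta_\BB$.

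The main obstacle is exactly this injectivity step: it is the one point at which \propref{fb exact} genuinely diverges from \thmref{exact}, where the vertical maps were honest $C^*$-subalgebra inclusions and injectivity cost nothing. To make it airtight one must first pin down in precisely what sense $\sigma$, $\rho$ and $\eta$ interact with the $G$-grading --- so that $\ker\eta_\AA$ is really an invariant ideal and the elementary fact that an invariant ideal meeting the unit fibre trivially must vanish becomes available. Once that bookkeeping is settled, the rest is a direct transcription of the proof of \thmref{exact}, with \lemref{abstract exact} carrying the argument.
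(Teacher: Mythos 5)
Your overall strategy is exactly the one the authors have in mind: the paper gives no proof of \propref{fb exact} (the appendix only remarks that it ``could be proved similarly to \thmref{exact}''), and feeding the $\eta$-ladder into \lemref{abstract exact} is precisely that plan. Your verifications of exactness of the bottom row, of the ideal property of $\phi^\sigma(\II^\sigma)$, of surjectivity of $\psi^\sigma$, and of nondegeneracy of the vertical maps are all fine, and you are right that the only point of genuine divergence from \thmref{exact} is that the verticals are no longer honest inclusions, so their injectivity must be earned.

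That is where your argument has a real gap. You claim that $\ker\eta_\AA$, being the kernel of a graded (equivariant) homomorphism, is the closed span of its intersections with the spectral subspaces, so that triviality on the unit fibre forces triviality everywhere. This is false for general topologically graded $C^*$-algebras: the assertion ``an element all of whose Fourier coefficients vanish is zero'' is equivalent to faithfulness of the canonical conditional expectation onto the unit fibre, which characterizes the \emph{reduced} cross-sectional algebra among the topologically graded ones. A concrete counterexample to the step you need is the regular representation $\lambda\colon C^*(F_2)\to C^*_r(F_2)$ for the free group $F_2$, graded over $F_2$ by the one-dimensional fibres $\C u_g$: it is a graded homomorphism, injective (indeed isometric) on every spectral subspace and with unital, hence nondegenerate, range, yet its kernel is a nonzero ideal meeting every fibre trivially. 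Since the algebras $\AA^\sigma$ arising in this paper (for instance $A^D\subset A\xm D$) are in general not reduced, the bootstrap from unit-fibre injectivity cannot be expected to go through. The repair is either to read hypothesis (ii) as (or strengthen it to) the statement that each $\eta_\AA$ is injective on all of $\AA^\sigma$ --- which is what actually holds in the motivating application, where $\eta_\AA$ is the inclusion $A^D\hookrightarrow A\xm D$ --- or to add a hypothesis guaranteeing a faithful conditional expectation onto the unit fibre. With full injectivity of the verticals in hand, the remainder of your argument is a correct transcription of the proof of \thmref{exact}.
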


In fact,
\thmref{exact} could be deduced almost immediately from
\propref{fb exact},
but we decided to avoid this approach.


\providecommand{\bysame}{\leavevmode\hbox to3em{\hrulefill}\thinspace}
\providecommand{\MR}{\relax\ifhmode\unskip\space\fi MR }
\providecommand{\MRhref}[2]{%
  \href{http://www.ams.org/mathscinet-getitem?mr=#1}{#2}
}
\providecommand{\href}[2]{#2}

\end{document}